\newtheorem{theorem}{Theorem}
\theoremstyle{remark}
\newtheorem{remark}{Remark}
\newtheorem{definition}{Definition}
\newtheorem{example}{Example}
\newtheorem{lemma}{Lemma}
\numberwithin{equation}{section}
\newcommand{\norm}[1]{\Vert#1\Vert}
\newcommand{\abs}[1]{\vert#1\vert}
\DeclareMathOperator{\median}{median}
\DeclareMathOperator{\dist}{dist}
\newcommand{\bq}{\begin{equation}}
\newcommand{\eq}{\end{equation}}
\newcommand{\R}{\mathbb{R}}
\newcommand{\bO}{\mathcal{O}}
\newcommand{\V}{{V}}
\newcommand{\E}{{E}}
\newcommand{\GG}{{Z}}
\newcommand{\Ww}{{W}}
\newcommand{\yy}{\mathbf{y}}
\newcommand{\xx}{\mathbf{x}}
\newcommand{\yx}{ \yy(x)}
\newcommand{\pp}{{p}}
\newcommand{\qb}{{q}}
\newcommand{\zb}{\mathbf{0}}
\newcommand{\wb}{\mathbf{1}}
\newcommand{\ur}{{u}}
\newcommand{\vr}{v}
\newcommand{\CG}{C(G)}
\newcommand{\Skr}{{F}}
\newcommand{\Sku}{{F(\ur)}}
\newcommand{\Skv}{{F(\vr)}}
\newcommand{\Skp}{{H}}
\newcommand{\grhs}{g}
\newcommand{\grad}{\nabla}
\newcommand{\Lap}{\Delta}
\newcommand{\MC}{\Delta_1}
\newcommand{\ME}{\lambda_1}
\newcommand{\Eikp}{\norm{\grad \ur(x)}^{+}}
\newcommand{\Eikn}{\norm{\grad \ur(x)}^{-}}
 \renewcommand{\div}{\operatorname{div}}
\begin{document}

\title[Nonlinear PDEs on graphs]
{Nonlinear elliptic Partial Differential Equations and p-harmonic functions  on graphs}

\author{J.~J.~Manfredi, A.~M.~Oberman, and A.~P.~Sviridov}
\address{Juan J. Manfredi
\hfill\break\indent
Department of Mathematics
\hfill\break\indent
University of Pittsburgh
\hfill\break\indent Pittsburgh, PA 15260, USA
\hfill\break\indent
{\tt manfredi@pitt.edu}}

\address{Adam M. Oberman
\hfill\break\indent
Department of Mathematics and Statistics
\hfill\break\indent
McGill University 
\hfill\break\indent Montreal, Quebec, Canada
\hfill\break\indent
{\tt adam.oberman@mcgill.ca}}

\address{Alexander P. Sviridov
\hfill\break\indent
Department of Mathematics
\hfill\break\indent
University of Pittsburgh
\hfill\break\indent Pittsburgh, PA 15260, USA
\hfill\break\indent
{\tt aps14@pitt.edu}}

\date{\today}

\begin{abstract}
In this article we study the well-posedness (uniqueness and existence of solutions) of nonlinear elliptic  Partial Differential Equations (PDEs) on a finite graph.     These results are obtained using the discrete comparison principle and connectivity properties of the graph.   This work is in the spirit of the theory of viscosity solutions for partial differential equations. 

The equations include the graph Laplacian, the $p$-Laplacian, the Infinity Laplacian,  and the Eikonal operator on the graph.

\end{abstract}
\thanks{ We would  like to thank the anonymous referees for the careful reading of the original version of this manuscript and their valuable comments and suggestions. The authors would also like to thank Yves van Gennip, Yao Yao, and Tiago Salvador for their careful reading of the revised version of this
manuscript.}

\maketitle


\section{Introduction}
In this article we consider existence and uniqueness of solutions to nonlinear elliptic  partial differential equations  on a finite graph.    
The uniqueness results are based on appropriate versions of the the discrete comparison principle, which for some equations depends on fine  connectivity properties of the graph.   

\subsection{Context and motivation}
The graph Laplacian has been of interest  since Birkhoff studied it  in the nineteenth century.  
 Finite difference discretizations of nonlinear elliptic Partial Differential Equations (PDEs)  have been used extensively in Image Processing~\cite{SapiroBook}.
Modern applications of PDEs on graphs include machine learning, clustering, and social networks~\cite{discretecalculusBook}. 

By their very definition, PDEs on graphs use only local information: locality is a requirement for problems on distributed networks~\cite{TsitlikisConsensus}.  
Very large graphs challenge the primarily combinatorial tools originally designed to study them~\cite{LovaszGraphs}.   
One popular measure of tractability is that algorithms run in polynomial time.  For our purposes, that is not sufficient.  
The PDEs of the types we study here can be solved quickly (in log-linear time) on grids~\cite{TsitsiklisFastMarching, FastSweeping,ObermanFroeseMANum}.   This kind of fast solvers may also be available for this class of equations on general graphs. 
 
Nonlinear PDEs on graphs arise naturally in stochastic control theory, when the state space is discrete~\cite{BertsekasBook1}. Numerical methods for stochastic control problems can be shown to converge using probabilistic methods~\cite{KushnerDupuisBook} or using the viscosity solutions theory~\cite{BSnum}.  
In general, wide stencil schemes are needed to represent linear equations by positive difference schemes~\cite{MotzkinWasow}.  Most discretizations assume a compact stencil, which leads to specific requirements on the structure of the equation, such as diagonal dominance~\cite{BSnum}.   Extending these schemes to general problems requires the use of wide stencils~\cite{Zidani}.
More recently, Lions and Lasry have also extended the HJB theory to Mean Field Games~\cite{LionsLasryMFG}. 
Mean Field Games have been posed on graphs~\cite{GueantMFGgraphs, GomesMFGgraphs}.    

 Stochastic control theory in the continuous setting leads to Hamilton-Jacobi-Bellman (HJB) equations~\cite{FlemingSonerBook}.  These equations motivated the theory of viscosity solutions.  
The theory of nonlinear (and possibly degenerate) elliptic PDEs in continuous space is now well understood~\cite{CIL}.   
In general, discretization of  these PDEs using the finite difference method may not converge.  However,  if the finite difference schemes also obey a comparison principle, then a convergence proof is available~\cite{BSnum}.  These finite difference schemes, which are called \emph{elliptic}~\cite{ObermanSINUM} can be characterized by a structure  condition.   Elliptic schemes generalize upwind schemes for first order equations and schemes of positive type for second order equations~\cite{MotzkinWasow}.  The PDEs studied here 
on general graphs coincide with elliptic finite difference schemes when the graph is a wide stencil finite difference grid.  Finite differences are discussed in Section  \ref{sec:finitediff}.

The Infinity Laplacian PDE~\cite{CrandallTour} has been well studied in both the discrete and continous settings.  
A convergent discretization of the  Infinity Laplace equations was presented in~\cite{ObermanILnum} (see also~\cite{LegruyerOlder}).  This discretization is reinterpreted here as a PDE on a graph.   This equation has also been interpreted using  random tug-of-war games, in both the continuous and the discrete setting~\cite{PSSW}.  
Variants of the Infinity Laplace equation can be posed on graph (see~\eqref{Fpdefn} below),  these also arise as  finite difference schemes for  $p$-harmonic functions~\cite{ObermanpLap}.
The uniqueness for Infinity Laplace equation follows from the uniqueness of the finite difference scheme, and from the comparison with cones property~\cite{ArmstrongSmartUniqueness, ArmstrongSmartFD, AntunovicPeresSheffieldSomersille}.   These results have been extended to more general equations in~\cite{Armstrong-Crandall-Julin-Smart} and to equations with drift in~\cite{Armstrong-Smart-Somersille}.
The existence and uniqueness of the solution of $p$-Laplacian  and the connection with the game interpretation has been futher studied in \cite{PS} and \cite{MPR1}.

An elliptic discretization for the equation for motion of level sets by mean curvature was presented in~\cite{ObermanMC}.   An earlier work~\cite{Catte} presented a convergent method, but this was semi-discrete: although the method was implemented on a grid, the scheme was presented in the continuous case.  The scheme in~\cite{ObermanMC} involved the median (see~\eqref{MC} below): the use of wide stencils was required to fully discretize the equation onto a grid.   The game interpretation of motion by mean curvature was presented in~\cite{KohnSerfaty}.   The game involved a formula similar to the one in~\cite{Catte}.   The median scheme and the game scheme are both consistent, so they yield very similar results in the continuous setting.  This game formulation does not have a natural graph interpretation because it requires a notion of direction\footnote{One player choses a direction vector, the other player choose whether to move in the direction of the vector or the opposite direction.}, which is not available on a graph.  The median scheme is defined on a graph below.    However,  there is no reason to expect in general that  solutions of the resulting flow on the graph respect the same properties (such as the shrinkage of level set curves) as do solutions of the continuous PDEs.

\subsection{Results of the article}
It is natural to ask whether there are general conditions on the PDE which lead to existence and uniqueness results.   
In this article, we establish well-posedness (uniqueness and existence of solutions) results for nonlinear elliptic PDEs using structure conditions on the equations and connectivity properties of the graph. 
We do not rely on linearity, variational structures,  game interpretations, or optimal control interpretations.  Instead, we use the comparison principle as the basic tool.
\par
We begin with the two simplest structure conditions.  
By analogy with~\cite{CIL} these correspond to proper and uniformly elliptic equations.  Existence and uniqueness results for proper equations were established in~\cite{ObermanSINUM}.   Our first result here is to define uniformly elliptic equations and to prove well-posedness.  In the uniformly elliptic case, the uniqueness result uses the idea of marching to the boundary, which can be found in the early paper of Motzkin-Wasow on linear elliptic finite difference schemes~\cite{MotzkinWasow}.  This proof readily generalizes to the nonlinear case.  Well-posedness can fail for elliptic PDEs on graphs, as examples below show.
Our results are specific enough to avoid these examples, while still being general enough to consider a wide class of operators.

The next class of operators we consider are generalizations of the Eikonal and Infinity Laplace equations.
These are degenerate (neither proper nor uniformly elliptic) equations which depend  on the maximum and the minimum of the neighboring values.  The uniqueness proof for the generalized Infinity Laplace equations is a variation of the proof in~\cite{LegruyerOlder}, see also~\cite{ArmstrongSmartUniqueness} and~\cite{ArmstrongSmartFD}.  Another proof based on martingales is in~\cite{APS}. 


\begin{remark}
We do not discuss parabolic equations, but the theory can easily be modified to include this case~\cite{ObermanSINUM}.  The focus of \cite{ObermanMC, ObermanILnum, ObermanCENumerics} was on numerical approximation.  Well posedness for the discrete equations was not established.  This result was not  needed  for convergence, because a small perturbation  of the equation makes it proper without affecting consistency.
\end{remark}

\section{PDEs on weighted graphs, definitions, properties and examples}
We consider a finite weighted directed graph $G=(\V, \E, w)$.  
Here $\V$ is the set of vertices $\{x_1, \dots x_N\}$ and $\E \subset \V\times \V$ is the set of oriented edges.  We denote by $e_{xy}$ the edge $(x,y)$ that goes from the vertex $x$ to the vertex $y$.  The positive weight function $w\colon \E \to \R^+$ is defined on directed edges that join different vertices. We write
\[
w(e_{xy}) =  w_{xy}>0
\]
for the value of the weight function on the edge $e_{xy}$.

We also identify a non-empty subset  $\partial \V \subset \V$, which we call the \emph{boundary} of the graph. The interior of the graph is the set of vertices $V\setminus \partial V$. 
We write $x,y$ for typical vertices in $\V$. The vertex $y\not= x$ is a neighbor of the vertex $x$,  if there is an edge $e_{xy}$ from $x$ to $y$. The degree of a vertex  $d(x)$ is the number of neighbors of the vertex $x$.   We shall always assume that for an interior vertex $x\in \V\setminus \partial \V$ we have
$d(x)\ge 1$.
For each $x \in \V$ we fix an ordering of the neighbors of $x$, 
which we write as
\[
\yx = (y_1, y_2, \ldots, y_{d(x)}).
\]
This is merely for notational convenience because our results will be independent of the choice of ordering.  We also define the set formed by the vertex $x$ and its neighbors
$N(x)=\{x, y_1,y_2,\ldots, y_{d(x)}\}$. \par
The directed distance between $x$ and its neighbor $y\not = x$ is 
\[
d(x,y)=\frac{1}{w_{xy}}. 
\]
We also set $d(x,x)=0$. 
The distance between two arbitrary vertices $x$ and $y$ is the minimal path distance
\[
d(x,y)=\inf\Big\{d(x,x_1)+d(x_1,x_2)+\ldots +d(x_{k}, y) \Big\},
\]
where the infimum is taken over all finite paths proceeding via neighboring 
vertices which start at $x$ and end at $y$. If there is no path connecting
$x$ and $y$ we set $d(x,y)=+\infty$.  We say $x$ is connected to $y$ if $d(x,y)$ is finite.    We say that the graph is connected to the boundary, if for any $x\in \V$ there is some $y\in \partial \V$ with $d(x,y) < \infty$.

\begin{example}\label{geomG}
We define geometric graphs, such as those pictured in \autoref{fig:geometricgraphs}.  
In a geometric graph the edge relations are determined by  a set of 
$l$ non-zero linearly independent vectors 
$\{v_1, v_2,\ldots, v_l\}$ in $\mathbb{R}^n$, where $l\le n$, as follows. Set
\[
\mathbf{H}= (v_1, -v_1, v_2, -v_2, \ldots, v_l, -v_l).
\]
For an interior vertex $x$, the neighbors of $x$ are given by
\[
\yy(x)=x+\mathbf{H}.
\]
A boundary vertex is a vertex that does not have  full set of neighbors. 
A compact geometric graph (as a metric space) is necessarily finite.  A \emph{convex} geometric graph is defined by the property that for  any $x\in V$, if we have that $ y = x + mv_j \in V$ for a natural number $m$, we also have
\[
x + nv_j \in V, \quad n = 1, 2, \dots m-1. 
\]
A regular convex geometric graph is one that has at least one  interior vertex; in other words, for some $x\in V$ we have  $x+ \mathbf{H} \subset V$.
The interior of $V$  is the set of interior vertices, and the boundary $\partial V$ is its complement, which consist of precisely those vertices which do not have a full set of neighbors. The symmetric weights for adjacent vertices are given by inverse of the Euclidean distances between vertices
\[ 
w(x, x\pm v_j) = 1/\norm{v_j}.
\]
\end{example}

\begin{figure}
\begin{center}
\scalebox{.55}{
\includegraphics{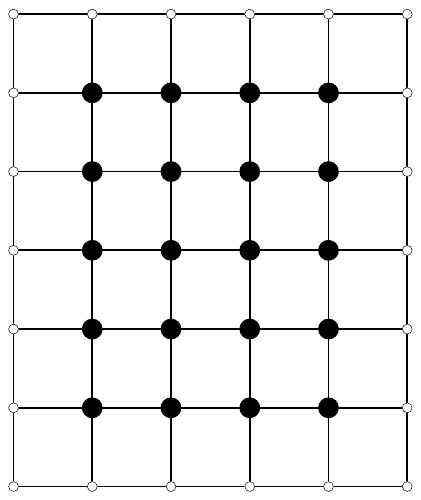}
\includegraphics{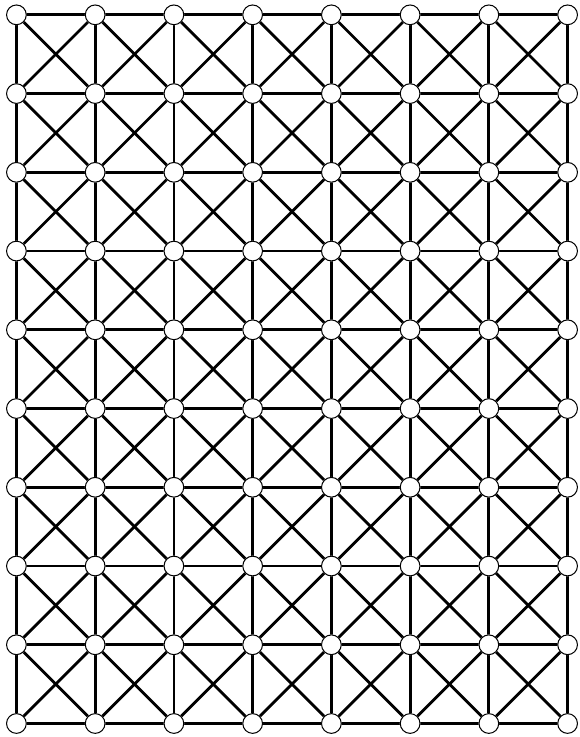}
\scalebox{.85}{\includegraphics{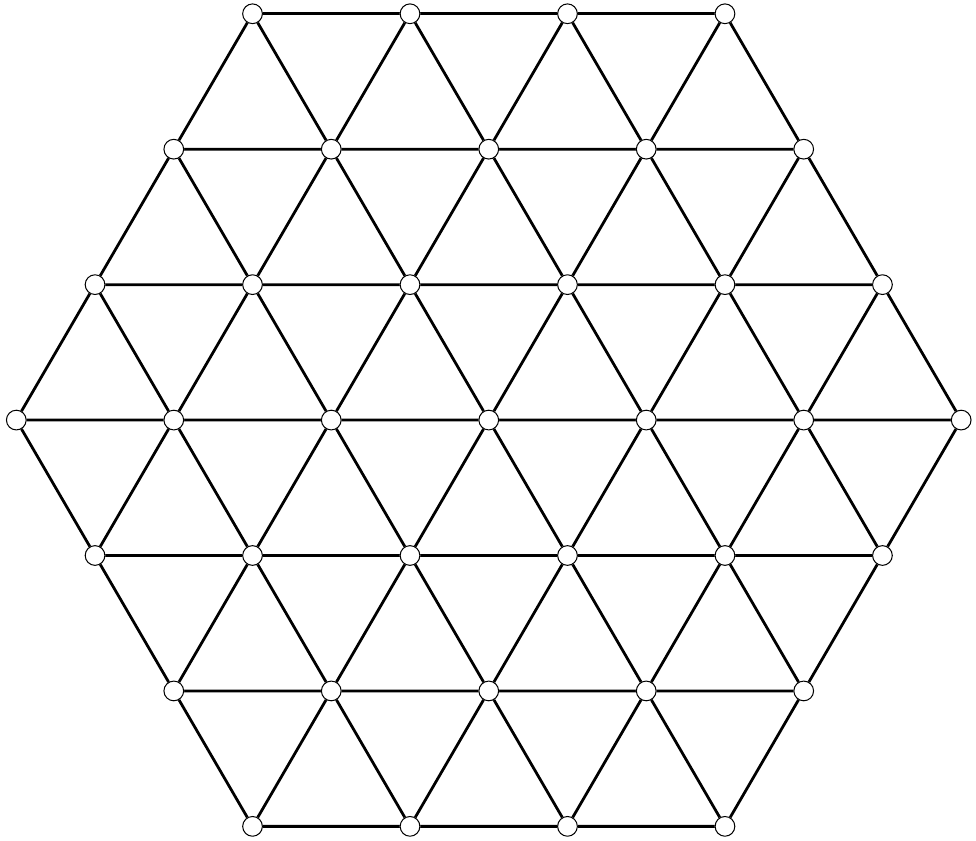}}
}
\end{center}
\caption{Regular convex geometric graphs: the boundary consists of the extremal vertices.}
\label{fig:geometricgraphs}
\end{figure}

We generally use boldface type  to emphasize that a quantity is a vector.
\begin{definition}[Vector Order Relations]\label{defn:vec}
Given vectors $\mathbf{p}, \mathbf{q} \in \R^n$ we write
\[
\mathbf{p} \le \mathbf{q} \iff  p_i \le q_i, \quad i=1,\dots, n.
\]
Also we write 
\[
\mathbf{p} < \mathbf{q} \iff \mathbf{p}\le \mathbf{q} \text{ and } p_i < q_i \text{ for some } i.
\]
We also write $\wb=(1,1,\ldots,1)$,  where the number of entries will be clear from context. 
\end{definition}

%

\subsection{PDEs on graphs}The class of all functions $u\colon V\to \R$ will be denoted by $C(G)$. 
The tangent space $TG(x)$  to the graph  $G$ at an interior vertex $x$ is
\[
TG(x)  = \R^{d(x)},
\] 
where $d(x)$ is the degree of $x$. The tangent bundle $TG$  is the disjoint
union of the tangent spaces
\[ 
TG=\bigcup_{x\in \V\setminus\partial \V} TG(x).
\]
Given
a function $u\in C(G)$ we write 
\[
u(\mathbf{y}(x)) =(u(y_1), \dots,  u(y_{d(x)})),
\]
for the list of values of $\ur$ at the list of neighbors $\yx$ at interior vertices
$x\in V\setminus\partial V$.  
\begin{definition}[Gradient]  
The gradient vector operator 
 $$\grad\colon \CG \mapsto TG$$  acting on a function $u\in C(G)$ 
 associates an element of $TG(x)$ to each interior vertex $x$  as follows
 \bq\label{gradient}
\begin{aligned}
\grad \ur(x) 
&=
\left( w(e_{xy{_1}})\left (\ur(y_1) - \ur(x) \right ) , \dots, w(e_{xy{_d}})(\ur(y_{d}) - \ur(x)) \right).
\end{aligned}
\eq
\end{definition}

We consider a special class of operators $$\Skr: \CG \to \CG,$$  called \emph{Partial Differential Equation} (PDE) operators. These operators are local in the sense that the value of $\Sku(x)$ at an interior vertex $x$ depends only on the value of the function $\ur$  and its gradient $\grad u$ at $x$. 

\begin{definition}
We say that  $\Skr: \CG \to \CG$ is a partial differential equation  on the graph $G$, if it can be written in the form
\bq\label{PDE}\tag{PDE}
\left\{
\begin{aligned}
F(u)(x)&=f(x,\ur(x),\grad u(x)) & \text{     for   }x \in \V \setminus \partial \V,  \\
F(u)(x)&= g(x) - \ur(x)   & \text{ for }x\in \partial \V,
\end{aligned}
\right.
\eq
where $f\colon(\V\setminus\partial\V) \times \R \times TG \to \R$
and $g\colon\partial V\to \R$.
\par
The \emph{Dirichlet problem} for our PDE requires finding a function 
$\ur\in \CG$ such that 
\bq\label{D}\tag{D}
\left\{
\begin{aligned}
f(x,\ur(x),\grad u(x)) &= 0, & \text{     for  }x \in \V \setminus \partial \V,  \\
g(x) - \ur(x) &=0,  & \text{ for }x\in \partial \V.
\end{aligned}
\right.
\eq
\end{definition}
 
Note that for $u$ to be  a solution to the Dirichlet problem \eqref{D}  $\Sku(x) = 0$ must hold for all $x\in \V$. 
\section{Examples of PDEs on graphs}

\subsection{Specific Examples}\label{sec:genform}
Consider a finite weighted directed graph $G=(\V,E,w)$.  In the following examples, where the   condition on $\partial V$ is not explicitly mentioned, we assume that $x\in \V\setminus\partial \V$ is an interior vertex.\par
The most studied PDE on a graph is the graph Laplacian. 
\begin{definition} The (weighted) graph Laplacian in  $G$ is given by
\bq\label{LapDefn}
\Delta u(x) =   \wb \cdot \nabla u(x) 
= \sum_{i=1}^{d(x)}  w_{xy_i} (\ur\left(y_i\right)- \ur(x)).
\eq

\end{definition}

We define the  component-wise maximum and minimum functions  on vectors  $\xx \in \R^d$ 
by
$\min(\xx) =\min \{ x_1,\dots, x_d \}$ and 
$\max(\xx) = \max\{ x_1,\dots, x_d \}$. This leads to two
 natural  PDEs   related to the  graph distance known as the eikonal operators.
\begin{definition} The positive and negative eikonal operators on the graph are 
\bq\label{EikPDefn}
\Eikp  =  \max (\grad u(x)) =    \max_{1\le i\le d(x)} w_{xy_i}  \left ( \ur(y_i) - \ur(x) \right )
\eq
and 
\bq\label{EikNDefn}
\Eikn  =   \min (\grad u(x)) = \min_{1\le i\le d(x)}   w_{xy_i}  \left ( \ur(y_i) - \ur(x) \right )
\eq
respectively,
where we have used definition~\eqref{gradient}.
\end{definition}

\begin{example}\label{ex:eikonal}
Consider the homogeneous Dirichlet problem~\eqref{D} for the positive eikonal operator
\[
\left \{
\begin{aligned}
 \Eikp - 1&=0, &&  x \in \V\setminus \partial \V,
\\
-\ur(x)&=0, && x \in \partial \V.
\end{aligned}
\right .
\]
The solution is the negative distance function to the target set $\partial \V$.  The positive distance function is the solution of the corresponding Dirichlet problem for  the negative eikonal operator  $\Eikn +1 =0$. Let us verify the last statement.
Given $\ur(x)$ equal to the (positive) distance function.  Then the minimal component of $\grad u$ will  be  a negative one, along the neighbor which contains the shortest path to the target set. Because the path from this neighbor will be one vertex
shorter, the  equation holds.   We show uniqueness below in~\autoref{sec:uniqueEik}.
\end{example} 

\begin{definition} The infinity Laplacian on the graph is given by
\bq\label{ILDefn}
\Delta_{\infty}\ur(x)=
\frac{\Eikp +  \Eikn}{2}.
\eq
\end{definition}

\begin{definition}
The  1-Laplacian operator on the graph is given by
\bq\label{MC}
\MC \ur(x) = {\median} (\grad u(x)).
\eq
The median of the set $\{x_1,\dots, x_k\}$  is found by arranging all the numbers from lowest value to highest value and selecting the middle one. If $k$ is even, the median is defined to be the mean of the two middle values.
\end{definition} 
This operator is studied in~\cite{ObermanMC}, 
where it is shown to correspond to the operator in the equation for motion of level sets by mean curvature, provided the neighbors are arranged close to uniformly about a circle of small radius centered at $x$.

\subsection{Definitions: Elliptic PDEs and the Comparison Principle}

\begin{definition}[Comparison principle]
Given a graph $G$ and a partial differential equation $\Skr$, we say that \emph{the comparison principle} holds for $\Skr$ in $G$ if we have
\bq\label{comparison}\tag{Comp}
\Skr(u)(x) \ge \Skr(v)(x)\text{  for  all } x\in V \implies u(x) \le v(x) 
\text{ for all } x\in V.
\eq

\end{definition}
Uniqueness of solutions clearly follows from comparison, since if $u$ and $v$ are solutions $\Skr(u)(x)=\Skr(v)(x)=0\text{  for  all } x\in V$, so that  $u(x) \le v(x) $ and $u(x) \ge v(x)$. Thus  we conclude $u(x)=v(x)$ for all $x\in V$.

\begin{remark}
For the Dirichlet problem~\eqref{D} the inequality
$\ur(x) \le \vr(x)$ for $x\in \partial \V$ follows immediately from 
$\Skr(u)(x) \ge \Skr(v)(x)\text{  for } x\in \partial V$,  so  that to establish the comparison principle we would need to show that   $\ur(x) \le \vr(x)$ for interior vertices $x\in \V\setminus \partial \V$.
\end{remark}

The following definition refers to properties of the PDE $F$ on interior vertices
$V\setminus\partial V$.
\begin{definition}\label{defn:degenElliptic}    
The PDE  $\Skr$ is \emph{elliptic} at the vertex $x\in V\setminus\partial V$ if 	 we have
\bq\label{elliptic}
r \ge s, \quad \mathbf{\pp} \le \mathbf\qb \implies f(x, r, \mathbf{\pp}) \le f(x,s,\mathbf{\qb}). 
\eq for all $r,s\in \R$ and  all $\mathbf{p},\mathbf{q}\in \R^{d(x)}$.\par
The PDE  $\Skr$  is elliptic  in $G$ if it is elliptic for all $x\in \V\setminus\partial V$.\par
The PDE  $\Skr$ is \emph{proper} at the vertex $x\in V\setminus\partial V$ if
\bq\label{proper}
r > s, \quad \mathbf{\pp} \le \mathbf\qb  \implies f(x, r, \mathbf{\pp}) <f(x,s,\mathbf{\qb}).
\eq or all $r,s\in \R$ and  all $\mathbf{p},\mathbf{q}\in \R^{d(x)}$.\par
The PDE  $\Skr$  is proper in $G$ if it is proper for all $x\in \V\setminus\partial V$.\par
The PDE  $\Skr$  is \emph{uniformly elliptic} at the vertex  $x\in \V\setminus\partial V$ if we have
\bq\label{Uelliptic}
r \ge s, \quad \mathbf{\pp} < \mathbf\qb \implies f(x, r, \mathbf{\pp}) < f(x,s,\mathbf{\qb}).
\eq
for all $r,s\in \R$ and  all $\mathbf{p},\mathbf{q}\in \R^{d(x)}$.\par
Recall that  strict inequality means strict in at least one component.\par
The PDE  $\Skr$ is uniformly elliptic in $G$ if it is uniformly elliptic for all  $x\in \V \setminus \partial \V$.
\end{definition}

\begin{example}
The graph Laplacian is uniformly elliptic while the trivial equation $\Sku(x) = \grhs(x) - \ur(x) $ for all $x\in V$ is proper.   The eikonal equation and the infinity Laplace equation are elliptic, but they are neither proper nor uniformly elliptic.
\end{example}

Observe that if the function $u$ has a local maximum at the vertex  $x\in V\setminus \partial V$  we  have 
\bq\label{localmax}
\grad u(x) \le \zb.
\eq
If the local maximum is strict, the condition becomes
\bq\label{strictlocalmax}
\grad u(x) < \zb.
\eq


\begin{remark}
The condition~\eqref{localmax} plays the role of the familiar second derivatives
condition  $D^2 u(x) \le 0$ at a local maximum $x$  of  $u$,  when  $u$ is twice-differentiable.
\end{remark}

The following lemma is a restatement of Definition~\ref{defn:degenElliptic} in terms of local maxima of $\ur - \vr$.
\begin{lemma}\label{lemma1}Let $x\in V\setminus\partial V$ be an interior vertex. 
The PDE $\Skr$ is elliptic at the vertex  $x$ if we have
\begin{equation}\label{Fell}
\ur(x) \ge \vr(x)\text{ and }  \grad u(x) \le \grad v(x)
\implies
\Sku(x) \le \Skv(x).
\end{equation}
The PDE $\Skr$  is proper at the vertex $x$ if
\begin{equation}\label{Fellp}
\ur(x) > \vr(x) \text{ and } \grad u(x) \le \grad v(x)
\implies
\Sku(x) < \Skv(x).
\end{equation}
The PDE $\Skr$ is uniformly elliptic at the vertex $x$ if
\begin{equation}\label{Fellu}
\ur(x) \ge \vr(x) \text{ and }  \grad u(x) < \grad v(x)
\implies
\Sku(x) < \Skv(x).
\end{equation}
The hypothesis of (\ref{Fell}) is precisely that $u-v$ has a non-negative local maximum at the vertex $x$, while in (\ref{Fellp}) the hypothesis is that $u-v$ has a strictly positive local maximum, and in (\ref{Fellu}) the hypothesis is that $u-v$ has a non-negative strict local maximum. 
\end{lemma}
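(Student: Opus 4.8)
The plan is to read each of the three displayed implications as Definition~\ref{defn:degenElliptic} after the substitution $r=\ur(x)$, $s=\vr(x)$, $\mathbf p=\grad\ur(x)$, $\mathbf q=\grad\vr(x)$, and to observe that this substitution sets up a bijection between the abstract data $(r,\mathbf p)\in\R\times\R^{d(x)}$ appearing in~\eqref{elliptic}, \eqref{proper}, \eqref{Uelliptic} and the pairs $(\ur(x),\grad\ur(x))$ realized by functions $\ur\in\CG$. Since at an interior vertex $\Sku(x)=f(x,\ur(x),\grad\ur(x))$, the conclusions $f(x,r,\mathbf p)\le f(x,s,\mathbf q)$ become $\Sku(x)\le\Skv(x)$ verbatim, so the only real content is the bijection, which rests on the positivity of the weights.

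First I would do the direction ``definition implies displayed condition.'' Assuming~\eqref{elliptic}, take any $\ur,\vr$ with $\ur(x)\ge\vr(x)$ and $\grad\ur(x)\le\grad\vr(x)$; setting $r=\ur(x)$, $s=\vr(x)$, $\mathbf p=\grad\ur(x)$, $\mathbf q=\grad\vr(x)$ makes the hypotheses of~\eqref{elliptic} hold, and its conclusion is exactly~\eqref{Fell}. The proper and uniformly elliptic cases are word-for-word the same after swapping the inequalities as in~\eqref{proper} and~\eqref{Uelliptic} to obtain~\eqref{Fellp} and~\eqref{Fellu}.

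For the converse I would exhibit, for any prescribed $(r,\mathbf p)$, a function attaining it: put $\ur(x)=r$ and $\ur(y_i)=r+p_i/w_{xy_i}$ for each neighbor (and anything elsewhere). Because the weights satisfy $w_{xy_i}>0$, formula~\eqref{gradient} gives $\grad\ur(x)=\mathbf p$, so every pair $(r,\mathbf p)$ is hit; applying~\eqref{Fell} to two such functions then recovers~\eqref{elliptic}, and similarly for the other two cases. The step I would be most careful about is exactly this surjectivity: positivity of $w$ is what guarantees that $\ur\mapsto(\ur(x),\grad\ur(x))$ is onto, so that no pair $(r,\mathbf p)$ in the definition is left unrealized. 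Beyond that, nothing is hard.

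Finally I would justify the closing sentence by rewriting the hypotheses in terms of $\ur-\vr$. Linearity of the gradient in~\eqref{gradient} gives $\grad(\ur-\vr)(x)=\grad\ur(x)-\grad\vr(x)$, so $\grad\ur(x)\le\grad\vr(x)$ is the same as $\grad(\ur-\vr)(x)\le\zb$; reading~\eqref{localmax} as an equivalence (each component $w_{xy_i}\big((\ur-\vr)(y_i)-(\ur-\vr)(x)\big)\le0$ is just $(\ur-\vr)(y_i)\le(\ur-\vr)(x)$, valid since $w_{xy_i}>0$) this says precisely that $\ur-\vr$ has a local maximum at $x$. Pairing this with $\ur(x)\ge\vr(x)$, with $\ur(x)>\vr(x)$, and with the strict version $\grad\ur(x)<\grad\vr(x)$ corresponding to~\eqref{strictlocalmax} yields the non-negative local maximum, strictly positive local maximum, and non-negative strict local maximum descriptions, respectively.
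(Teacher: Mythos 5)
Your proposal is correct and takes essentially the same route as the paper: the paper offers no separate proof, presenting Lemma~\ref{lemma1} as a direct restatement of Definition~\ref{defn:degenElliptic} under the substitution $r=\ur(x)$, $s=\vr(x)$, $\mathbf{p}=\grad \ur(x)$, $\mathbf{q}=\grad \vr(x)$, which is exactly the translation you carry out. The details you add --- surjectivity of $\ur\mapsto(\ur(x),\grad \ur(x))$ via positivity of the weights, and the weight-positivity/linearity argument identifying the hypotheses with (non-negative, strictly positive, strict) local maxima of $\ur-\vr$ --- are precisely what the paper leaves implicit.
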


\begin{remark}
These definitions correspond in the continuous case  to the definitions of
proper, elliptic, and uniformly elliptic equations  in \cite{CIL}, with the inequalities reversed.  The reversal of inequalities comes from the fact that some authors, us included,   use the convention that $\Lap u$ is elliptic, which others use the convention that $-\Lap u$ is elliptic. 
 \end{remark}

\begin{remark}
Proving the converse of the inequalities in Lemma \ref{lemma1} leads to uniqueness for solutions of the PDEs.  We will prove that solutions of uniformly elliptic and proper PDEs are unique.   We will also give other conditions on (degenerate) elliptic schemes which lead to uniqueness.
\end{remark}

\subsection{Generalized $p$-harmonious equations} We
consider elliptic PDEs  that are expressed in terms
of functions $f(x, \mathbf{p})$  for interior vertices $x$ and that are independent of $u$.
\begin{definition} The elliptic function $f(x,\mathbf{p})$ is homogeneous if there exists a non-negative function $w_0\colon V\setminus \partial V\to \mathbb{R}$ such that we have 
\bq\label{fprop2}
w_0(x)\, {{\min}(\mathbf{p}) }\le f(x,\mathbf{p}) \le w_0(x) \, {\max(\mathbf{p})},
\eq
for all $x\in V\setminus \partial V$ and $\mathbf{p}\in\mathbb{R}^{d(x)}$. 
\end{definition}

\begin{definition}
We say $\Skr$ is  a \emph{(generalized) $p$-harmonious equation} if it is of the form
\bq\label{pharm2}
\Sku(x)  = w_+(x) \Eikp  + w_-(x)  \Eikn + f(x,\grad u(x)),
\eq
where $w_+$ and $w_-$ are strictly
positive functions in $V\setminus\partial V$, and the   function $f$ is  elliptic and homogeneous as in ~\eqref{fprop2}. 
\end{definition}
\begin{example}
Consider the special  case of~\eqref{pharm2}  given by
\bq\label{Fpdefn}
\Skr(u)(x) = w_0 \,  \Delta \ur(x) + w_1 \MC \ur(x)
 + w_+\Eikp  + w_-  \Eikn,  \eq
where $w_0$, $w_1$,  $w_+$, and $w_-$ are non-negative functions on interior vertices.
We call a solution of $F(u)(x)=0$ with $F$ given by \eqref{Fpdefn}  $p$-harmonious with drift by analogy with the continuous case in \cite{MPR2}.   Functions of this type arise as the approximations of $p$-harmonic functions~\cite{ObermanpLap, MPR2} when $w_+=w_-$.\par

 The  classical (not on a graph) normalized $p$-harmonic operator for $1\le p <\infty$ is 
\[
\begin{split}
\Delta_{p}^N u=\frac{1}{p}|\nabla u|^{2-p}\Delta_{p} u=\frac{1}{p}|\nabla u|^{2-p}\ {\div}(|\nabla u|^{p-2}\nabla u),
\end{split}
\]
while  for $p=\infty$ we set
\bq\label{infinitylap}
\begin{split}\Delta_{\infty}^N u=|\nabla u|^{-2}\Delta_{\infty} u=|\nabla u|^{-2}\ \langle D^2u\, \nabla u,\nabla u\rangle
\end{split}.
\eq
It is easy to see that
\begin{equation}
\label{eq:norm-p-lap-in-parts}
\Delta_{p}^N u= \frac{1}{p}\Delta_{1}^N u + \frac{1}{q} \Delta_{\infty}^{N}u,
\end{equation}
where $q$ is the H\"older conjugate of $p$, $1/p+1/q=1$. Note that
$\Delta_{2}^{N}u= (1/2) \Delta u$ and that
$$ \Delta u= \Delta_{1}^{N}u+ \Delta_{\infty}^{N}u.$$ 
Therefore if we set $\omega_0=1$, $\omega_1=1/p$, and $w_+=w_-=1/(2q)$
in (\ref{Fpdefn}) we obtain the discrete analogue of the normalized $p$-Laplace operator
\bq\label{Fpdefn-new}
F_p(u)(x) =   \frac{1}{p} \MC \ur(x)
 + \frac{1}{2q}\Eikp  + \frac{1}{2q} \Eikn. \eq

 In the latter case
an approximating sequence is generated by running  tug-of-war stochastic games with noise  of decreasing step-size.  
The value of the game function satisfies a nonlinear equation, which is directly linked to the existence and uniqueness of the solution to the $p$-Laplacian for $1<p\le \infty$,  as demonstrated in  \cite{PSSW}, \cite{PS} and \cite{MPR2}.

\end{example}
\begin{definition}
We say $\Skr$ is a \emph{positive eikonal}  equation if it is of the form
\bq\label{eikp1}
\Sku(x) = w_+(x) \Eikp + f(x,\grad u(x)) - g(x), 
\eq
where $w_+$ and $g$ are strictly positive functions, and  the 
  function $f$ is elliptic and homogeneous as in ~\eqref{fprop2}. 
\end{definition}

\section{Proofs: uniformly elliptic and proper cases}
The uniqueness proofs for different classes of PDEs share the same structure:  (i) assume a failure of the comparison principle, (ii) identify a  set where the comparison principal fails maximally, and (iii) use the properties of the equation to propagate this set to the boundary and obtain a contradiction. To this end we give the following definition.

\begin{definition}\label{defn:M}
Given $\ur,\vr \in \CG$ define the set of vertices where maximum of $u-v$ is attained
\[
\Ww = \Ww(\ur,\vr) = \arg \max_{x\in \V }(\ur(x) -\vr(x)) 
\]
and the actual  maximum 
\[
M = M(\ur,\vr) =\max_{x\in \V }(\ur(x) -\vr(x)). 
\]

\end{definition}

\subsection{Counterexamples to well-posedness}

\begin{example}[Failure of existence]
Existence can fail for the Dirichlet problem for an eikonal equation with the wrong sign, e.g.~$\norm{\grad u}^+ = -1$. 
 A simple example is $K_3$, the fully connected graph with three vertices 
 $\V = \{ A,B, C \}$, three edges $e_{AB}$, $e_{BC}$, and $e_{CA}$,  and unit weights.  
 Let $C$ be a boundary point, with $u(C) = 0$.    
 The PDE can be written after a simple manipulation as 
 \[
 u(A) = \max\{u(B),0\}+ 1, 
 \quad
u(B) = \max\{u(A),0\} + 1,
\]
which has no solution, since the first equation implies $u(A) > u(B)$, and the second $u(B) > u(A)$.
\end{example}

In the continuous setting, the PDE   for level set motion by mean curvature  is well-posed in the parabolic case, but uniqueness fails for the Dirichlet problem, see~\cite{SZ}. Uniqueness also fails for the Dirichlet problem in the discrete case, as the following example shows.

\begin{example}[Failure of uniqueness]\label{Failure of uniqueness}

Uniqueness can fail for the 1-Laplacian~\eqref{MC}.
Consider the planar geometric graph $G$ illustrated in \autoref{fig:fourvertices}, where  the weights are set equal to one. 
The twelve vertices in $G$ are $$\V=\bigg\{ (i,j), \, i, j\in\{-2,-1,1,2\} \colon  |i|+|j|\le 3\bigg\}.$$   We set $\partial\V=\left\{ (i,j) \in \V \colon  |i|+|j|= 3\right\}$ (eight vertices). The edges are given by segments parallels to the axis in such a way that each for the four vertices in
$\V\setminus\partial\V$ has four neighbors. We assign boundary values 
by setting 
\[
g(1,2)=g(-1,2)=g(-1,-2)=g(1,-2)=-1
\]
and 
\[ 
g(2,1)=g(2,-1)=g(-2,1)=g(-2,-1)=1. 
\]
Then,  the constant functions 
\[
u(i,j)=1,  \quad (i,j)\in\V\setminus\partial\V
\]
and
 \[
v(i,j)=-1,  \quad (i,j)\in\V\setminus\partial\V
\]  
are both solutions.  This is because in both cases, three of the four neighbors have the same value.\end{example}

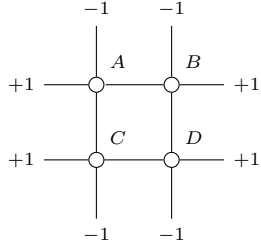
\begin{figure}

\begin{tikzpicture}
\tikzstyle{every node}=[font=\scriptsize]

\node (13) at (1,3) {$-1$};
\node [circle,draw,inner sep=0pt,minimum size=2mm,label=60:$A$] (A)  at (1,2) {} edge [-] (13);
\node [circle,draw,inner sep=0pt,minimum size=2mm,label=60:$C$] (C)  at (1,1) {} edge [-] (A);
\node (10) at (1,0) {$-1$} edge [-] (C);

\node (23) at (2,3) {$-1$};
\node [circle,draw,inner sep=0pt,minimum size=2mm,label=60:$B$] (B)  at (2,2) {} edge [-] (23) ;
\node [circle,draw,inner sep=0pt,minimum size=2mm,label=60:$D$] (D)  at (2,1) {} edge [-]  (B) edge [-] (C);
\node (20) at (2,0) {$-1$} edge [-] (D);
\node (01) at (0,1) {$+1$} edge [-] (C);
\node (02) at (0,2) {$+1$} edge [-] (A);
\node (31) at (3,1) {$+1$} edge [-] (D);
\node (32) at (3,2) {$+1$} edge [-] (B);
\node (23) at (1,2) {} edge [-] (B);
\end{tikzpicture}
\caption{The graph and the boundary values.}
\label{fig:fourvertices}
\end{figure}

\begin{remark}  
Uniqueness fails for the non-homogeneous infinity Laplacian equation
\[
\Delta_{\infty} \ur = g,
\]
in the case where $g(x)$ changes sign as noted in \cite{PSSW}.  For failure of uniqueness in the continuous case 
see~\cite{ArmstrongSmartFD}.
\end{remark}

\subsection{Proofs}



\begin{theorem}
Consider a graph $G$ which is connected to the  boundary $\partial \V\subset \V$. 
Suppose $\Skr$  is uniformly elliptic,  then the comparison principle \eqref{comparison} holds   for the Dirichlet problem~\eqref{D}.
\end{theorem}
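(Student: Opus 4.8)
The plan is to argue by contradiction using the marching-to-the-boundary strategy announced at the start of this section. Assume $\Skr$ is uniformly elliptic and that $\Sku(x) \ge \Skv(x)$ for all $x \in \V$, yet the conclusion $\ur \le \vr$ fails. Then $M = M(\ur,\vr) > 0$ and the maximal set $\Ww$ is nonempty. The first step is to observe that comparison already holds on the boundary: for $x \in \partial \V$ the relation $\Sku(x) \ge \Skv(x)$ reads $g(x) - \ur(x) \ge g(x) - \vr(x)$, hence $\ur(x) \le \vr(x)$ there. Since $M > 0$, this forces $\Ww \subset \V \setminus \partial \V$; that is, the maximum of $\ur - \vr$ is attained only at interior vertices.

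The heart of the argument is a propagation step. Fix an interior vertex $x \in \Ww$. Because $\ur(y) - \vr(y) \le M = \ur(x) - \vr(x)$ for every neighbor $y$ of $x$, rearranging and multiplying by the positive weights $w_{xy}$ gives $\grad \ur(x) \le \grad \vr(x)$ componentwise. Together with $\ur(x) \ge \vr(x)$, this is exactly the hypothesis of \eqref{Fell}, so ellipticity yields $\Sku(x) \le \Skv(x)$; combined with the standing assumption $\Sku(x) \ge \Skv(x)$ we conclude $\Sku(x) = \Skv(x)$. Now I invoke uniform ellipticity in the contrapositive form of \eqref{Fellu}: if any component of $\grad \ur(x)$ were strictly less than the corresponding component of $\grad \vr(x)$, then $\grad \ur(x) < \grad \vr(x)$ and we would get $\Sku(x) < \Skv(x)$, a contradiction. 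Hence $\grad \ur(x) = \grad \vr(x)$, which means $\ur(y) - \vr(y) = \ur(x) - \vr(x) = M$ for every neighbor $y$ of $x$. In other words, every neighbor of an interior vertex of $\Ww$ again lies in $\Ww$.

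Finally I use connectivity to the boundary to close the argument. Pick any $x_0 \in \Ww$ and a path $x_0, x_1, \dots, x_k$ proceeding via neighboring vertices with $x_k \in \partial \V$; such a path exists because $G$ is connected to the boundary. Let $j$ be the first index with $x_j \in \partial \V$, so that $x_0, \dots, x_{j-1}$ are interior and $j \ge 1$ (as $x_0$ is interior). Applying the propagation step at the successive interior vertices $x_0, x_1, \dots, x_{j-1}$ shows in turn that $x_1, \dots, x_j \in \Ww$, whence $\ur(x_j) - \vr(x_j) = M > 0$. But $x_j \in \partial \V$, where we already established $\ur \le \vr$, a contradiction. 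Therefore $M \le 0$, i.e.\ $\ur \le \vr$ on all of $\V$, which is the comparison principle \eqref{comparison}.

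The step I expect to be the crux is the propagation: it is precisely there that uniform ellipticity, rather than mere ellipticity, is indispensable, since it is the strict conclusion in \eqref{Fellu} that forces the gradients to agree componentwise and thereby pulls the neighbors into $\Ww$. Ellipticity alone would only give $\Sku(x) \le \Skv(x)$ and would not propagate the maximal set—indeed the counterexamples above, where the relevant operators are elliptic but not uniformly elliptic, show that comparison can genuinely fail without the uniform hypothesis. Connectivity to the boundary then performs the routine task of carrying $\Ww$ to a boundary vertex, where the strict positivity of $M$ cannot survive.
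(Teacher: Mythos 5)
Your proof is correct and is essentially the paper's own marching-to-the-boundary argument: the paper follows a path from $\Ww$ to the boundary, stops at the first vertex $x \in \Ww$ having a neighbor outside $\Ww$, notes that then $\grad u(x) < \grad v(x)$, and obtains the contradiction from \eqref{Fellu} at that interior vertex, whereas you run the same mechanism in contrapositive form (uniform ellipticity forbids any vertex of $\Ww$ from having a neighbor outside $\Ww$) and land the contradiction at the boundary instead; the two organizations are logically interchangeable. One caveat: your intermediate appeal to \eqref{Fell} is not licensed by the hypotheses, since the theorem assumes only uniform ellipticity and, with the paper's definitions, \eqref{Uelliptic} does not formally imply \eqref{elliptic} (it imposes no constraint at all when the two gradient arguments are equal, absent a continuity assumption); fortunately this step is superfluous, because your contrapositive step already produces a contradiction against the standing assumption $\Sku(x) \ge \Skv(x)$, so the derived equality $\Sku(x) = \Skv(x)$ is never actually needed and can simply be deleted.
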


\begin{proof} Suppose that $F(u)(x)  \ge F(v)(x)$ for all $x\in V$. 
It follows that $u(x)\le v(x)$ for boundary vertices $x\in\partial V$.
We wish to show that $u(x) \le v(x)$ for interior vertices $x\in V\setminus\partial V$.
Define the set $\Ww$ and the quantity $M$ as in Definition~\ref{defn:M}.
Assume that $M>0$. By assumption, there is a path from any point in $\Ww$ to some point in $\partial \V$, 
so by following the path until the first point outside the set is found, 
there exists $x \in \Ww$ with neighbor $y$ such that $y \not\in \Ww$.  We conclude that 
$x$ is a strict positive local maximum of $\ur-v$, so that
$\nabla u(x) < \nabla v(x)$.  Since $\Skr$ is uniformly elliptic at $x$, we can apply~\eqref{Fellu} to obtain 
\begin{align*}
\Sku(x) < \Skv(x),
\end{align*}
which contradicts the uniform ellipticity assumption and completes the proof.
\end{proof}

\begin{theorem}
Consider a graph $G$ which is connected to the  boundary $\partial \V\subset \V$. 
Suppose $\Skr$  is proper,  then the comparison principle \eqref{comparison} holds   for the Dirichlet problem~\eqref{D}.
\end{theorem}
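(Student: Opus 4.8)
The plan is to reuse the three-step template from the uniformly elliptic case, but to exploit the fact that properness is stronger in the $\ur$-variable: by~\eqref{Fellp} it produces the strict conclusion $\Sku(x) < \Skv(x)$ from only a \emph{non-strict} gradient comparison $\grad u(x) \le \grad v(x)$, as long as $\ur(x) > \vr(x)$. Because of this, I expect the proof to be simpler than the uniformly elliptic one: neither the marching-to-the-boundary step nor strictness of the local maximum should be needed, and in fact the connectivity hypothesis will turn out to be unnecessary here.

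First I would assume $\Skr(u)(x) \ge \Skr(v)(x)$ for all $x\in\V$ and extract the boundary comparison. On $\partial\V$ the operator reads $\Skr(u)(x) = g(x) - \ur(x)$, so the standing hypothesis gives $g(x) - \ur(x) \ge g(x) - \vr(x)$, whence $\ur(x) \le \vr(x)$ for every $x\in\partial\V$. Next I would introduce the maximizing set $\Ww$ and the value $M$ from Definition~\ref{defn:M} and argue by contradiction, assuming $M>0$.

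The key observation is that, since $\ur-\vr \le 0$ on $\partial\V$ while $M>0$, the set $\Ww$ lies entirely in the interior $\V\setminus\partial\V$. I may therefore pick \emph{any} vertex $x\in\Ww$: it is interior and satisfies $\ur(x)-\vr(x)=M>0$, so $\ur(x)>\vr(x)$. Since $x$ maximizes $\ur-\vr$ over all of $\V$, each neighbor $y$ obeys $\ur(y)-\vr(y)\le \ur(x)-\vr(x)$, i.e. $\ur(y)-\ur(x)\le \vr(y)-\vr(x)$; multiplying by the positive weights $w_{xy}$ of~\eqref{gradient} yields the component-wise inequality $\grad u(x) \le \grad v(x)$. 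Thus $\ur-\vr$ has a strictly positive local maximum at $x$, which is exactly the hypothesis of~\eqref{Fellp}.

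Applying properness at $x$ then gives $\Sku(x) < \Skv(x)$, contradicting $\Skr(u)(x) \ge \Skr(v)(x)$. Hence $M\le 0$, that is $\ur(x)\le \vr(x)$ for all $x\in\V$, which establishes~\eqref{comparison}. I do not anticipate a genuine obstacle: the only subtlety is recognizing that the non-strict gradient comparison already suffices under properness, so no propagation of $\Ww$ toward the boundary is required and the connectivity assumption is not actually invoked.
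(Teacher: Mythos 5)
Your proof is correct and is essentially the paper's own argument: take any vertex $x$ in the maximizing set $\Ww$ (necessarily interior, since the hypothesis forces $\ur\le\vr$ on $\partial \V$), observe that $\ur(x)>\vr(x)$ and $\grad u(x)\le\grad v(x)$, and apply properness~\eqref{Fellp} to get $\Sku(x)<\Skv(x)$, contradicting $\Sku(x)\ge\Skv(x)$. Your side observation that connectivity to the boundary is never actually used is also consistent with the paper, whose proof of this theorem likewise makes no use of that hypothesis (it is needed only in the uniformly elliptic case, where marching to the boundary produces a \emph{strict} local maximum).
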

\begin{proof}
The proof is similar to that of previous theorem. In this case we take $x\in W$, a positive local maximum of $u-v$ so that $\nabla u(x)\le \nabla v(x)$. 
Because $u(x) > v(x)$ and $\Skr$ is proper we deduce
$$ F(u)(x)=f(x, u(x),\nabla u(x)<f(x,v(x),\nabla v(x))=F(v)(x),$$
in contradiction with our hypothesis.
\end{proof}

\begin{remark} We thank the anonymous referee for pointing out that indeed what  is used in
the proofs of the above theorems is the condition
$$r>s, \,\, \mathbf{p}< \mathbf{q} \implies f(x,r, \mathbf{p})< f(x,s, \mathbf{q})$$
for interior vertices $x\in V\setminus\partial V$. This conditions is weaker
than both uniform ellipticity and properness.\end{remark}

\section{Uniqueness for Eikonal and $p$-harmonic Equations}
In contrast to the uniformly elliptic case, where~\eqref{Fellu} holds,  for the equations considered in  this section,  a strict local maximum  of $\ur-v$ does not imply a strict inequality for the values of the equation.  Instead, we need to locate a neighboring value which is active (see Lemma \ref{lemHP} below)  in the equation.

\subsection{A Lemma on Propagation of Maxima}
We begin with the definition of an active neighbor.

\begin{definition} Given an interior vertex $x$ and a neighbor $z$ of $x$, we say  $z$ is an \emph{active} neighbor of $x$ if 
\[
\Eikp = w_{xz} ( \ur(z) - \ur(x)).
\] 
\end{definition}

\begin{lemma}\label{lemHP}
Suppose that $\Skr$ is of the form
\bq\label{eikp2}
\Sku(x) = w_+(x) \Eikp + \Skp(\ur)(x)
\eq
where $w_+(x) > 0$ for all $x$, and the PDE  $\Skp$ is elliptic.
Suppose that $$F(u)(x) \ge F(v)(x)$$ for all $x\in V\setminus\partial V$.
Consider the vertex set $\Ww$ and the quantity $M$ as in Definition~\ref{defn:M} and suppose that $M > 0$. Then we have that 
if $x \in \Ww\cap (V\setminus\partial V)$ and $z$ is an active neighbor of $x$, then $z\in \Ww$.

\end{lemma}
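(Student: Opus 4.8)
The plan is to exploit that $x\in\Ww$ forces $u-v$ to attain its global maximum $M$ at $x$, so that the local-maximum condition~\eqref{localmax} gives $\grad u(x)\le\grad v(x)$, and then to show that an active neighbor $z$ is compelled to inherit this maximal value. First I would record the two consequences of $x\in\Ww\cap(V\setminus\partial V)$ together with $M>0$: namely $\ur(x)>\vr(x)$ (because $\ur(x)-\vr(x)=M>0$) and $\grad u(x)\le\grad v(x)$ (because $u-v$ has a maximum at $x$). Since $\Skp$ is elliptic, condition~\eqref{Fell} of Lemma~\ref{lemma1} applies and yields $\Skp(\ur)(x)\le\Skp(\vr)(x)$.

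Next I would substitute the decomposition~\eqref{eikp2} into the hypothesis $F(u)(x)\ge F(v)(x)$. Subtracting and using $\Skp(\vr)(x)-\Skp(\ur)(x)\ge 0$ from the previous step, I obtain
\[
w_+(x)\bigl(\Eikp-\norm{\grad\vr(x)}^+\bigr)\ge \Skp(\vr)(x)-\Skp(\ur)(x)\ge 0,
\]
and since $w_+(x)>0$ this forces $\Eikp\ge\norm{\grad\vr(x)}^+$. On the other hand, $\grad u(x)\le\grad v(x)$ componentwise and the monotonicity of $\max$ give the reverse inequality $\Eikp\le\norm{\grad\vr(x)}^+$. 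These two bounds pinch the positive eikonal terms together, so that $\Eikp=\norm{\grad\vr(x)}^+$.

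The final step uses that $z$ is an active neighbor. By definition $\Eikp=w_{xz}(\ur(z)-\ur(x))$, while $\norm{\grad\vr(x)}^+\ge w_{xz}(\vr(z)-\vr(x))$ because the positive eikonal is a maximum over all neighbors and $z$ is one of them. Combining with the equality just established and cancelling the factor $w_{xz}>0$ gives $\ur(z)-\ur(x)\ge\vr(z)-\vr(x)$, i.e.\ $\ur(z)-\vr(z)\ge \ur(x)-\vr(x)=M$. Since $M$ is the global maximum of $\ur-\vr$, equality must hold, and therefore $z\in\Ww$.

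I expect the only delicate point to be the pinching argument producing $\Eikp=\norm{\grad\vr(x)}^+$: it is precisely here that the positivity of $w_+$ and the ellipticity of $\Skp$ are both required, and this equality is what replaces the strict-inequality mechanism used in the uniformly elliptic theorem. Everything else is routine bookkeeping with the definitions of the maximum set $\Ww$ and of an active neighbor.
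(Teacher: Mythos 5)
Your proof is correct and follows essentially the same route as the paper: ellipticity of $\Skp$ gives $\Skp(\ur)(x)\le\Skp(\vr)(x)$, the hypothesis $F(u)(x)\ge F(v)(x)$ then yields $\Eikp\ge\norm{\grad \vr(x)}^+$, and comparison at the active neighbor $z$ propagates the maximum. The only difference is your ``pinching'' step establishing the equality $\Eikp=\norm{\grad \vr(x)}^+$; it is valid but unnecessary, since the inequality $\Eikp\ge\norm{\grad \vr(x)}^+$ already suffices for the final comparison, which is all the paper uses.
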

\begin{proof}
Choose a vertex $x\in \Ww\cap (V\setminus\partial V)$.  Because  $u-v$ has a local maximum at $x$ we have  $\grad u(x) \le \grad v(x)$. Therefore,  by the ellipticity of $\Skp$~\eqref{Fell} we have 
\[
\Skp(\ur)(x) \le \Skp(v)(x).
\]
Using our hypothesis on $F$ we conclude 
\[
\Eikp \ge \norm{\grad \vr(x)}^+,
\]
and because $z$ is an active neighbor,
\[
w_{xz} ( \ur(z) - \ur(x)) = \Eikp \ge  \norm{\grad \vr(x)}^+ \ge w_{xz} ( v(z) - v(x)),
\]
where the last inequality holds by the definition of the Eikonal operator.  This last result implies
\[
\ur(z) - v(z) \ge \ur(x) - v(x).
\]
But from the definition of $\Ww$ this last inequality must be an equality.   So $z$ is also in $\Ww$.
\end{proof}

\subsection{Proofs: uniqueness for eikonal equations}\label{sec:uniqueEik}

\begin{theorem}
Consider the Dirichlet problem~\eqref{D} for the positive eikonal equation ~\eqref{eikp1} on a graph $G$ with connected nonempty boundary $\partial \V$.  Suppose $\ur,\vr$ are solutions of the Dirichlet problem for~$\Skr$.  Then 
\[
u(x) \le v(x) \text{ for } x \in \partial \V
\implies 
u(x) \le v(x) \text{ for } x \in \V.
\]
\end{theorem}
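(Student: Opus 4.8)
The plan is to argue by contradiction, following the three-step recipe of Section~4: assume the comparison fails, identify the maximizing set $\Ww$, and then propagate it along active neighbors until we are forced onto the boundary. First I would observe that the positive eikonal equation \eqref{eikp1} has exactly the form \eqref{eikp2} required by Lemma~\ref{lemHP}: taking $\Skp(\ur)(x) = f(x,\grad u(x)) - g(x)$, this $\Skp$ is elliptic because $f$ is elliptic and the additive term $-g(x)$ depends neither on $\ur$ nor on $\grad u$, while $w_+(x) > 0$ by hypothesis. Since $\ur$ and $\vr$ both solve the Dirichlet problem~\eqref{D}, we have $\Sku(x) = 0 = \Skv(x)$, hence $\Sku(x) \ge \Skv(x)$ on $\V\setminus\partial\V$, so the hypotheses of Lemma~\ref{lemHP} are in force.

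Now suppose, for contradiction, that $u(x) \le v(x)$ on $\partial\V$ but that $M = M(\ur,\vr) > 0$, where $M$ and $\Ww$ are as in Definition~\ref{defn:M}. Because $u-v \le 0 < M$ at every boundary vertex, the maximizing set contains no boundary vertex, i.e.\ $\Ww \subset \V\setminus\partial\V$. The key quantitative step, and the one I expect to be the crux, is to show that at any interior solution vertex $x$ we have $\Eikp > 0$. This follows from the equation together with homogeneity: evaluating $\Sku(x) = 0$ gives $w_+(x)\Eikp = g(x) - f(x,\grad u(x))$, and the upper bound in \eqref{fprop2} yields $f(x,\grad u(x)) \le w_0(x)\max(\grad u(x)) = w_0(x)\Eikp$, so that $(w_+(x)+w_0(x))\Eikp \ge g(x) > 0$. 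Since $w_+(x) > 0$ and $w_0(x) \ge 0$, the leading coefficient is positive and therefore $\Eikp > 0$.

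With this estimate in hand the propagation closes the argument. Pick $x_0 \in \Ww$; it is interior, so it possesses an active neighbor $x_1$, and the active-neighbor relation at $x_0$ reads $w_{x_0 x_1}\bigl(u(x_1)-u(x_0)\bigr) = \norm{\grad \ur(x_0)}^+ > 0$, whence $u(x_1) > u(x_0)$. By Lemma~\ref{lemHP}, $x_1 \in \Ww$. Iterating yields a sequence $x_0, x_1, x_2, \ldots$ in $\Ww$ along which $u$ is strictly increasing, so the vertices are pairwise distinct. As long as the current vertex is interior the construction continues, but since $G$ is finite a strictly increasing walk cannot stay among interior vertices indefinitely, so some $x_{k+1}$ must be a boundary vertex, which nonetheless lies in $\Ww$ by Lemma~\ref{lemHP}. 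This contradicts $\Ww \subset \V\setminus\partial\V$. Hence $M \le 0$, that is, $u(x) \le v(x)$ for all $x \in \V$.

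The main obstacle is precisely the estimate $\Eikp > 0$: without strict positivity of the maximal gradient component the active-neighbor walk could cycle inside the interior and never reach $\partial\V$, leaving no contradiction. The strict positivity of $g$, combined with the homogeneity bound on $f$, is exactly what guarantees the strict monotonicity of $u$ along active neighbors and thereby forces the walk to terminate on the boundary.
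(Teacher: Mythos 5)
Your proof is correct and follows essentially the same route as the paper: both reduce to Lemma~\ref{lemHP} by writing the positive eikonal equation~\eqref{eikp1} in the form~\eqref{eikp2}, and both derive $\Eikp > 0$ from the homogeneity bound~\eqref{fprop2} together with the strict positivity of $g$ and $w_+$. The only difference is the finishing move: the paper avoids your iterated walk by first maximizing $u$ over $\Ww$ (the constant $C$ and the set $\GG$), so that a single application of the lemma at a vertex of $\GG$ yields an active neighbor $z \in \Ww$ with $u(z) > C$, an immediate contradiction, whereas your strictly increasing active-neighbor walk combined with finiteness of the graph reaches the same contradiction in more steps.
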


\begin{remark} 
A similar result holds for negative eikonal equations.
\end{remark} 

\begin{proof} 
Define $M$ and $\Ww$ as in Definition~\ref{defn:M}. Suppose $M > 0$.
We also define
\[
C=\max_{x\in \Ww} \ur(x),
\qquad
\GG=\{x\in \Ww \mid \ur(x)=C\} . 
\]
Consider $x \in \GG$ and choose $z$ to be an active neighbor of $x$. 
From Lemma~\autoref{lemHP}, since~\eqref{eikp1} can be written in the form~\eqref{eikp2}, we conclude that  $z \in \Ww$.

Because $u$ is a solution, we have
\begin{align*}
0 &= w_+(x) \Eikp + f(x,\grad u) - g(x)
\\
&\le w_+(x) \Eikp + w_0 \Eikp - g(x)
\end{align*}
by  property~\eqref{fprop2} of $f$.  Because $g$ is positive we must have
\[
\Eikp > 0.
\]
This means $u(z) > u(x) = C$ which is a contradiction to $z \in \Ww$.
\end{proof}

\subsection{Proof of uniqueness for $p$-harmonious  equations}

\begin{lemma}[Of Harnack type]\label{lemUC}
Let $\Skr$ be $p$-harmonious~\eqref{pharm2} and $x\in V\setminus\partial V$ be an
interior vertex.
If $\Skr(u)(x) = 0$, then either 
\bq\label{Enp}
\Eikn < 0 <  \Eikp
\eq
or
\[
\grad u(x) = \zb.
\]
\end{lemma}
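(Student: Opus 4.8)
The plan is to argue by contradiction through a sign analysis. I would assume $\Sku(x) = 0$ together with $\grad u(x) \neq \zb$, and aim to conclude $\Eikn < 0 < \Eikp$. Since $\Eikn = \min(\grad u(x))$ and $\Eikp = \max(\grad u(x))$, we always have $\Eikn \le \Eikp$, so the negation of the desired conclusion is simply that either $\Eikn \ge 0$ or $\Eikp \le 0$. I would dispatch these two cases separately, producing in each a strict inequality for $\Sku(x)$ that contradicts $\Sku(x) = 0$. The only structural facts I need are the explicit form~\eqref{pharm2}, the two-sided homogeneity estimate~\eqref{fprop2}, and the strict positivity of $w_+$ and $w_-$.

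First I would treat the case $\Eikn \ge 0$. Then every component of $\grad u(x)$ is nonnegative, and since $\grad u(x) \neq \zb$ at least one is strictly positive, which forces $\Eikp = \max(\grad u(x)) > 0$. The lower bound in~\eqref{fprop2} gives $f(x,\grad u(x)) \ge w_0(x)\,\Eikn \ge 0$, using $w_0 \ge 0$ and $\Eikn \ge 0$. Because $w_+(x) > 0$ and $w_-(x) > 0$, the three summands in~\eqref{pharm2} satisfy $w_+(x)\Eikp > 0$, $w_-(x)\Eikn \ge 0$, and $f(x,\grad u(x)) \ge 0$, so their sum is strictly positive, contradicting $\Sku(x) = 0$.

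The case $\Eikp \le 0$ is symmetric. Here every component of $\grad u(x)$ is nonpositive, and $\grad u(x) \neq \zb$ forces $\Eikn = \min(\grad u(x)) < 0$. The upper bound in~\eqref{fprop2} yields $f(x,\grad u(x)) \le w_0(x)\,\Eikp \le 0$, while $w_+(x)\Eikp \le 0$ and $w_-(x)\Eikn < 0$; summing these gives $\Sku(x) < 0$, again contradicting $\Sku(x) = 0$. Having excluded both $\Eikn \ge 0$ and $\Eikp \le 0$, the only possibility left when $\grad u(x) \neq \zb$ is $\Eikn < 0 < \Eikp$, which is the assertion.

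I do not anticipate a real obstacle: the argument is bookkeeping of signs governed by the two-sided estimate~\eqref{fprop2} and the positivity of $w_+, w_-$. The one point deserving care is that the contradiction must arise from a \emph{strict} inequality, and this is precisely where the hypothesis $\grad u(x) \neq \zb$ enters: it upgrades $\Eikp \ge 0$ to $\Eikp > 0$ in the first case (and $\Eikn \le 0$ to $\Eikn < 0$ in the second), supplying the strict term that breaks the equality $\Sku(x)=0$.
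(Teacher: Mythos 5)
Your proof is correct and follows essentially the same route as the paper's: a contradiction argument that applies the two-sided homogeneity bound~\eqref{fprop2} to the equation $\Sku(x)=0$ and exploits the strict positivity of $w_+$, $w_-$ together with $\grad u(x)\neq\zb$ to force a strict sign. The paper organizes the casework slightly differently (it first derives the two combined inequalities and then rules out signs of $\Eikn$), but the underlying sign bookkeeping is the same.
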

Note that~\eqref{Enp} is equivalent to 
\bq\label{uconst}
\min u(\mathbf{y}(x))  < u(x) < \max u(\mathbf{y}(x)), 
\eq
which is the reason for describing the result as of Harnack type.
\begin{proof}

Assume $\grad u(x) \not= \zb$.
Immediately following from  the definition, we conclude that one of $\Eikn$ or $ \Eikp$ is nonzero. 
Suppose that $\Eikn \not = 0$ (the argument will follow in a similar way for the other case).

Applying~\eqref{fprop2} to~\eqref{pharm2} we obtain
\begin{align}
\label{a}\tag{i}
(w_0(x)+w_-(x))\Eikn &+ w_+(x) \Eikp
\le 0
\\
\label{b}\tag{ii}
0
\le 
w_-(x)\Eikn &+ (w_0(x)+w_+(x)) \Eikp.
\end{align}

If $\Eikn > 0$, then by definition $\Eikp > 0$ which contradicts~\eqref{a}.  So that we must have $\Eikn < 0$.  But then using~\eqref{b}, we obtain $\Eikp > 0$, thereby obtaining the result~\eqref{Enp}.
\end{proof}

\begin{theorem}
Consider the Dirichlet problem~\eqref{D} for the $p$-harmonious function $\Skr$  on a graph $G$ 
which is connected  to its nonempty boundary $\partial \V$.  
Suppose $\ur,v$ are solutions of the Dirichlet problem for~$\Skr$.  Then 
\[
u(x) \le v(x) \text{ for } x \in \partial \V
\implies 
u(x) \le v(x) \text{ for } x \in \V.
\]
\end{theorem}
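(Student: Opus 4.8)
The plan is to argue by contradiction, following the three-step template used for the earlier theorems: assume the comparison fails, locate the subset of the argmax set where it fails ``maximally,'' and propagate that subset to the boundary. Suppose $u\le v$ on $\partial\V$ but $M=\max_{x\in\V}(\ur(x)-\vr(x))>0$, and let $\Ww$ be the argmax set of Definition~\ref{defn:M}. Since $\ur-\vr\le 0$ on $\partial\V$ while $M>0$, no boundary vertex lies in $\Ww$, so $\Ww\subset\V\setminus\partial\V$; the goal is to produce a boundary vertex in $\Ww$, a contradiction. First I would observe that the $p$-harmonious equation~\eqref{pharm2} can be written in the form~\eqref{eikp2} with $\Skp(\ur)(x)=w_-(x)\Eikn+f(x,\grad u(x))$, and that this $\Skp$ is elliptic: the term $w_-(x)\Eikn$ is monotone in $\grad u(x)$ with $w_->0$, and $f$ is elliptic by hypothesis. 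Since $u,v$ are solutions we have $\Sku(x)=0=\Skv(x)$, so the hypotheses of Lemma~\ref{lemHP} are met and active neighbors of vertices of $\Ww$ remain in $\Ww$.

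Next, mimicking the eikonal proof, I would pass to the subset of $\Ww$ on which $\ur$ is largest: set $C=\max_{x\in\Ww}\ur(x)$ and $\GG=\{x\in\Ww\mid \ur(x)=C\}$. Fix $x\in\GG$; it is interior, so $\Sku(x)=0$ and the Harnack-type Lemma~\ref{lemUC} gives the dichotomy $\grad u(x)=\zb$ or $\Eikn<0<\Eikp$. In the second case there is an active neighbor $z$ with $\ur(z)>\ur(x)=C$, and Lemma~\ref{lemHP} forces $z\in\Ww$, contradicting $C=\max_{\Ww}\ur$. Hence that alternative is impossible on $\GG$, so every $x\in\GG$ satisfies $\grad u(x)=\zb$.

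The main obstacle is exactly this flat case $\grad u(x)=\zb$: unlike the eikonal setting, where positivity of $g$ forces $\Eikp>0$ and propagation is immediate, here Lemma~\ref{lemUC} genuinely permits $\grad u(x)=\zb$, and I must show this does not stall the propagation. The key idea will be to bring in $v$. Since $x\in\Ww$ is a local maximum of $\ur-\vr$ we have $\grad u(x)\le\grad v(x)$, hence $\grad v(x)\ge\zb$; applying Lemma~\ref{lemUC} to the solution $v$ excludes the alternative $\norm{\grad \vr(x)}^-<0$ and yields $\grad v(x)=\zb$ as well. Therefore every neighbor $y$ of $x$ has $\ur(y)=\ur(x)=C$ and $\vr(y)=\vr(x)$, so $\ur(y)-\vr(y)=M$ with $\ur(y)=C$, i.e.\ $y\in\GG$. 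Thus $\GG$ is closed under passing to neighbors. Finally, since $G$ is connected to the boundary, I would follow a finite directed path from any $x\in\GG$ to a boundary vertex; each successive vertex lies in $\GG$, so the terminal boundary vertex lies in $\GG\subset\Ww$, contradicting $\Ww\subset\V\setminus\partial\V$. This forces $M\le 0$, that is, $u\le v$ throughout $\V$.
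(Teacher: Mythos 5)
Your proof is correct, and its overall architecture is the same as the paper's: the argmax set $\Ww$, the further restriction to $\GG=\{x\in\Ww \mid \ur(x)=C\}$, the use of Lemma~\ref{lemHP} (after rewriting~\eqref{pharm2} in the form~\eqref{eikp2}) and of Lemma~\ref{lemUC} to force $\grad \ur(x)=\zb$ on $\GG$, and the final propagation of $\GG$ along a path to the boundary. The one genuine difference is the step you correctly identify as the main obstacle: why neighbors of $x\in\GG$ remain in $\Ww$ once $\grad \ur(x)=\zb$. The paper closes this gap with Lemma~\ref{lemHP} alone: when $\grad\ur(x)=\zb$, every neighbor $z$ satisfies $w_{xz}(\ur(z)-\ur(x))=0=\Eikp$, so \emph{every} neighbor is active, hence lies in $\Ww$ by Lemma~\ref{lemHP}, and has $\ur(z)=C$, hence lies in $\GG$; this observation is left implicit in the paper's terse sentence asserting $N(x)\subset\GG\subset\Ww$. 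You instead bring in the second solution: from $x\in\Ww$ you get $\grad\ur(x)\le\grad\vr(x)$, hence $\grad\vr(x)\ge\zb$, and applying Lemma~\ref{lemUC} to $\vr$ rules out its strict alternative and forces $\grad\vr(x)=\zb$, so both $\ur$ and $\vr$ are constant on $N(x)$ and the neighbors land in $\GG$ directly. Both arguments are sound; yours invokes the Harnack-type lemma twice (once per solution) but avoids the "zero gradient makes all neighbors active" observation, and it has the merit of making fully explicit a step the paper glosses over, at the cost of being slightly less economical.
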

\begin{proof} 
 Define the vertex set $\Ww$ and the quantity $M$ as in Definition~\ref{defn:M}.  Suppose $M > 0$.
Also define
\[
C=\max_{x\in \Ww} \ur(x),
\qquad
\GG=\{x\in \Ww \mid \ur(x)=C\}. 
\]

Consider $x \in \GG$ and choose $z$ to be an active neighbor of $x$.
Since the $p$-harmonious equation is also of the form~\eqref{eikp2}, we can apply Lemma~\ref{lemHP}, to conclude that  $z \in \Ww$.

Next,  we claim that $\grad \ur(x) =0$.  
Since $z\in \Ww$, by the definition of $C$ we have that  $u(z) \le C$. On the other hand, $\ur(x) = C$.
Since we assumed that $\ur$ is a solution, we can apply Lemma~\autoref{lemUC}, to conclude $\grad \ur(x) = 0$.  This implies  that  the  set $N(x) \subset 
\GG\subset\Ww$.  
It follows that we can  find a path from $x$ to $\partial \V$ which stays in $\GG$.  But we assumed that $u \le v$ on $\partial \V$ and that  $u > v$ on $\GG$, and hence a contradiction.\end{proof}

\section{Existence results}

The existence theory is somewhat delicate, since there are known examples where existence can fail.    In the special case of operators of the form 
\[
- \lambda u + F(u) 
\]
with $F$ elliptic, existence and uniqueness results were established in Theorem 7 of \cite{ObermanSINUM}.  Therefore in this section we consider equations involving no such dependence on $u$.

%
%

\subsection{Existence for homogeneous equations}

We consider homogeneous equations, which include the  Laplacian, the Infinity Laplacian, the Eikonal equations, and the other examples from~\autoref{sec:genform}.

\begin{theorem}
Let $f(x,\mathbf{p})$ be elliptic~\eqref{elliptic}  and homogeneous~\eqref{fprop2}.  Suppose that $f$ is continuous in $\mathbf{p}$ for each $x$.  Then the Dirichlet problem~\eqref{D} has a solution.
\end{theorem}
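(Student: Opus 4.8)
The plan is to turn the Dirichlet problem into a finite-dimensional fixed-point problem for a relaxation (Euler-type) map and to apply Brouwer's theorem, using the homogeneity bound \eqref{fprop2} to trap the iteration inside a box and the continuity of $f$ to license Brouwer. Let $m=\lvert\V\setminus\partial\V\rvert$ and identify candidate interior data with a vector $U=(u(x))_{x\in\V\setminus\partial\V}\in\R^m$; throughout, extend $U$ to all of $\V$ by declaring $u=g$ on $\partial\V$, so that for each interior $x$ the quantity $\nabla u(x)$ is an affine (hence continuous) function of $U$. Set $a=\min_{\partial\V}g$, $b=\max_{\partial\V}g$, let $K=\{U\in\R^m : a\le u(x)\le b \text{ for all } x\in\V\setminus\partial\V\}$ (a nonempty compact convex box, since $\partial\V\ne\emptyset$), and let $W^{*}=\max_{e\in\E}w_e<\infty$. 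Fix a step $\tau>0$ with $\tau\,W^{*}\max_{x}w_0(x)\le 1$ (any $\tau>0$ if $w_0\equiv 0$), and define $\Phi_\tau\colon\R^m\to\R^m$ by $\Phi_\tau(U)(x)=u(x)+\tau\,f(x,\nabla u(x))$.

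The map $\Phi_\tau$ is continuous because $U\mapsto\nabla u(x)$ is affine and $f(x,\cdot)$ is continuous by hypothesis. The crux is to verify $\Phi_\tau(K)\subseteq K$. Fix $U\in K$ and an interior vertex $x$, and write $t=\tau\,w_0(x)\,W^{*}\in[0,1]$. For the upper bound, every neighbor satisfies $u(y_i)\le b$, so $u(y_i)-u(x)\le b-u(x)\ge 0$, and since the weights are positive and bounded by $W^{*}$ we get $\max(\nabla u(x))\le W^{*}\,(b-u(x))$; by \eqref{fprop2},
\[
\Phi_\tau(U)(x)\le u(x)+\tau\,w_0(x)\max(\nabla u(x))\le u(x)+t\,(b-u(x))=(1-t)u(x)+tb\le b .
\]
Symmetrically, $u(y_i)\ge a$ gives $\min(\nabla u(x))\ge W^{*}\,(a-u(x))$ (the factor $a-u(x)\le 0$ is multiplied by a weight $\le W^{*}$), so $\Phi_\tau(U)(x)\ge(1-t)u(x)+ta\ge a$. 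Thus $\Phi_\tau(U)(x)\in[a,b]$, i.e.\ $\Phi_\tau(U)\in K$. Brouwer's fixed-point theorem then yields $U^{*}\in K$ with $\Phi_\tau(U^{*})=U^{*}$, that is $\tau\,f(x,\nabla u^{*}(x))=0$ for every interior $x$; since $\tau>0$ this forces $f(x,\nabla u^{*}(x))=0$ on $\V\setminus\partial\V$, while $u^{*}=g$ on $\partial\V$ by construction, so $u^{*}$ solves \eqref{D}.

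I expect the main obstacle to be exactly the box-invariance step: one must use homogeneity to bound $f(x,\nabla u(x))$ by the signed distance of $u(x)$ to the boundary values, being careful with signs in both the $\max$ and $\min$ estimates and with the degenerate case $w_0(x)=0$ (where $f(x,\cdot)\equiv 0$ and the step is harmless). Note that ellipticity \eqref{elliptic} is not actually needed for this existence argument—only homogeneity and continuity—whereas ellipticity is what drives the uniqueness results proved earlier. As an alternative one could instead run Perron's method: the functions equal to $g$ on $\partial\V$ and constant $a$ (resp.\ $b$) on the interior are a sub- and a supersolution by \eqref{fprop2}, and one would take the supremum of subsolutions; however, verifying that the Perron envelope is itself a solution is more delicate here than the direct fixed-point argument, so I would present the Brouwer approach.
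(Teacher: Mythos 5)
Your proposal is correct and follows essentially the same route as the paper's own proof: both apply Brouwer's fixed-point theorem to the relaxation map $u \mapsto u + \tau f(x,\grad u(x))$ (the paper takes $\tau = 1/L$ with $L = \max w_0(x) w_{xy}$), prove invariance of the box $[\min g, \max g]$ using the homogeneity bound~\eqref{fprop2} together with a small-step condition, and treat the degenerate case $w_0 \equiv 0$ separately. Your sign-careful bounds against the box endpoints $a,b$ are a minor variant of the paper's bound of $T(u)(x)$ between $\min_{y\in N(x)} u(y)$ and $\max_{y\in N(x)} u(y)$, but the argument is the same in substance.
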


\begin{proof}
We use the Brouwer fixed point theorem: a continuous function from a convex, compact subset $K$ of Euclidean space to itself has a fixed point.

We identify the set of functions on the graphs  $C(G)$  with $\R^N$ and 
consider the 
set 
\[
K = 
\left\{ 
u \in C(G) \mid 
u(x) = g(x),  x\in \partial \V,
m \le u(x) \le M,  x \in V\setminus \partial \V
\right \},
\]
where
\[
m = \min_{x\in \partial \V} g(x), \qquad
M =  \max_{x\in \partial \V} g(x).
\]
Then $K$ is a convex and compact set.

Define the constant 
\[
L =  {\max_{x\in V\setminus\partial V,\, y\in \mathbf{y}(x)}   {w_0(x)}{w_{xy}}}\, ,
\]
and supposed that $L>0$.  Define the mapping $T:C(G) \to C(G)$ by
\begin{eqnarray}
T(u)(x) = &u(x) + \frac 1 L f(x,\grad u(x)), &\text{ for } x\in V\setminus\partial V\label{NAmap}
\\
T(u)(x)= &g(x), &\text{ for }x\in \partial V. \notag
\end{eqnarray}
We claim $T$ is continuous and that it takes $K$ to $K$.   The result will follow from this claim and the Brouwer fixed point theorem.
\par
The continuity of $T$ follows from the continuity of $f(x,\mathbf{p})$ in the
variable $\mathbf{p}$ for each fixed $x\in V \setminus\partial V$. To see this,
enumerate the vertices
$$\{x_1, x_2, \ldots, x_n, x_{n+1}, \ldots x_{N}\},$$
where the first $n$ vertices are interior vertices and the last $N-n$ are boundary
vertices. The map $T\colon \mathbb{R}^N\mapsto\mathbb{R}^N$ is given by an
expression of the form
$$(Tu)_i= u_i+\frac{1}{L} f(x_i, \{u_j-u_i\}_j),$$
where the second argument of $f$ is just $\nabla u(x_i)$ for $1\le i \le n$ and
by 
$$ (Tu)_i= g(x_i),$$
which is a constant value in $u$, for $n<i\le N$.

\par

Note that from~\eqref{fprop2} we have for interior vertices $x$
\[
w_0(x) {\norm{\mathbf{p}}^{-}}\le f(x,\mathbf{p}) \le w_0(x)  {\norm{\mathbf{p}}^{+}},
\]
so by the definition of $L$ and the positive eikonal operator~\eqref{EikPDefn}
\begin{eqnarray*}
\frac 1 {L} f(x,\grad u(x)) &\le&  \frac 1 L {w_0(x)}  {\norm{\grad u(x)}^{+}}\\
&=&  \frac 1 L w_0(x)     \max_{1\le i\le d(x)} w_{xy_i}    ( \ur(y_i) - \ur(x)) \\
&=&  \frac 1 L w_0(x)    \, \,   w_{xy_{i_0}}   ( \ur(y_{i_0}) - \ur(x)),  \text{   for some  } i_0\in[1,d(x)]\\
&\le&  \frac 1 L w_0(x)    \, \,   w_{xy_{i_0}}   ( \ur(y_{i_0}) - \ur(x))^+\\
&\le &  \frac 1 L w_0(x)    \, \,   w_{xy_{i_0}}    \max_{y\in N(x)}    ( \ur(y) - \ur(x))\\
&\le &\max_{y\in N(x)} \left \{ u(y) - u(x) \right \},
\end{eqnarray*}
where we have used the set $N(x)$ in the last two lines in case $ {\norm{\grad u(x)}^{+}}<0$.
Similarly by  using ~\eqref{EikNDefn}, we get 
\[
\frac 1 {L} f(x,\grad u(x)) \ge 
 \min_{y\in N(x)}  
\left \{ u(y) - u(x) \right \}.
\]
We conclude that 
\[
\min_{y\in N(x)} u(y) \le T(u)(x) \le \max_{y\in N(x)} u(y), 
\]
and so 
\[
m \le T(u)(x) \le M, \quad \text{ for all } x \in V.
\]
Then the claim follows when $L>0$. When $L=0$ we must have
$\omega_0(x)=0$,  and thus $f(x, \mathbf{p})=0$,   for all interior vertices $x\in V\setminus\partial V$. In this case, any extension of $g$ from $\partial V$ to
$V$ is a solution to the Dirichlet problem ~\eqref{D}.
\end{proof}

\subsection{Existence for eikonal equations}
We generalize the fact that the distance function to the set $\partial \V$ is the solution of the eikonal equation, Example~\ref{ex:eikonal}, to construct solutions of more general eikonal equations.

\begin{theorem}
The Dirichlet problem
\[
f(x,u,\grad u(x)) =
\begin{cases}
 \norm{\grad u(x)}^+ -  h(x), &  x \in V\setminus \partial V,
\\
u(x) - g(x), & x \in \partial \V.
\end{cases}
\]
for $h(x) > 0$, has a solution.
\end{theorem}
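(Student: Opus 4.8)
The plan is to generalize the observation in Example~\ref{ex:eikonal}, where the (negative) distance function to $\partial \V$ solves the homogeneous positive eikonal equation, to a weighted distance adapted to the cost $h$. First I would rewrite the interior equation $\Eikp = h(x)$ in Bellman form. Since $\Eikp = \max_i w_{xy_i}(u(y_i) - u(x))$ and $d(x, y_i) = 1/w_{xy_i}$, the equation $\max_i w_{xy_i}(u(y_i) - u(x)) = h(x)$ is equivalent to
\[
u(x) = \max_{1 \le i \le d(x)} \bigl( u(y_i) - h(x)\, d(x, y_i) \bigr),
\]
because the inequality $u(x) \ge u(y_i) - h(x)\, d(x,y_i)$ for every $i$ says exactly that $w_{xy_i}(u(y_i) - u(x)) \le h(x)$, while equality for some $i$ says the maximum equals $h(x)$.

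This suggests defining $u$ directly as the value function of a shortest-path problem. For a finite path $P = (x = z_0, z_1, \dots, z_k)$ that moves through neighboring vertices, stays in the interior for $z_0, \dots, z_{k-1}$, and terminates at $z_k \in \partial \V$, I would assign the cost $\sum_{j=0}^{k-1} h(z_j)\, d(z_j, z_{j+1})$ and set
\[
u(x) = \max_{P} \Bigl( g(z_k) - \sum_{j=0}^{k-1} h(z_j)\, d(z_j, z_{j+1}) \Bigr),
\]
the maximum over all such paths starting at $x$, with $u(x) = g(x)$ for $x \in \partial \V$. I would then verify three things: (i) the maximum is attained and finite; (ii) $u = g$ on $\partial \V$, which is immediate from the trivial path; and (iii) $u$ satisfies the Bellman equation above, and hence the eikonal equation by the equivalence just noted.

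For (iii) the argument is the standard dynamic-programming splitting. For the ``$\ge$'' direction, prepending the edge $x \to y_i$ to an optimal path out of $y_i$ (the trivial path if $y_i \in \partial\V$) produces an admissible path from $x$ of value $u(y_i) - h(x)\, d(x, y_i)$, so $u(x) \ge u(y_i) - h(x)\, d(x, y_i)$ for every neighbor. For the ``$\le$'' direction, the first edge of an optimal path from $x$ lands on some neighbor $y_{i_0}$, and deleting $x$ leaves an admissible path from $y_{i_0}$ of value at most $u(y_{i_0})$; hence $u(x) \le u(y_{i_0}) - h(x)\, d(x, y_{i_0}) \le \max_i \bigl( u(y_i) - h(x)\, d(x,y_i) \bigr)$. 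Combining the two gives the desired equality.

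The main obstacle is step (i), the finiteness and attainment of the maximum, which is where the hypotheses $h > 0$ and connectivity to the boundary enter. Because $h > 0$ and every edge weight is positive, any cycle inside a path contributes a strictly positive amount to the subtracted cost, so deleting a cycle strictly increases the path value; consequently optimal paths may be taken to be simple. As the graph is finite there are only finitely many simple paths, so the supremum is a genuine maximum, and it is finite provided at least one path to $\partial \V$ exists, which is guaranteed by assuming (as in the uniqueness theorem) that $G$ is connected to the boundary. I would close by remarking that without such connectivity the problem can be unsolvable, so the assumption is essentially necessary.
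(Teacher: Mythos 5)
Your proof is correct, but it takes a genuinely different route from the paper. The paper proceeds by reduction to a base case: it first observes that for $g=0$, $h\equiv 1$ the solution is the negative graph distance to $\partial\V$, then removes nonzero boundary data by graph surgery (attaching to each boundary vertex $x$ with $g(x)=g_1$ a new pendant boundary vertex at edge weight $1/g_1$, making $x$ interior), and finally absorbs nonconstant $h$ by reweighting $w_{xy}\mapsto w_{xy}h(x)$ and invoking the base case on the modified, possibly non-symmetric graph. You instead construct the solution in one step as the value function of a weighted longest/shortest-path problem, $u(x)=\max_P\bigl(g(z_k)-\sum_j h(z_j)\,d(z_j,z_{j+1})\bigr)$, and verify the equivalent Bellman equation $u(x)=\max_i\bigl(u(y_i)-h(x)\,d(x,y_i)\bigr)$ by the standard dynamic-programming splitting, with $h>0$ ensuring via cycle deletion that optimal paths are simple and the maximum is attained. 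The two constructions produce the same function --- your value function is precisely the negative weighted distance to the boundary of the paper's modified graph --- but your verification is self-contained and more rigorous on points the paper treats informally: it makes explicit the connectivity-to-the-boundary hypothesis (which the paper uses implicitly, since otherwise the distance is infinite, and which you correctly note is necessary), it avoids the delicate case $g(x)=0$ in the paper's surgery step (where the prescribed weight $1/g_1$ would be infinite), and it replaces the paper's unproved assertion that the distance function satisfies $\norm{\grad u}^+=1$ with an actual dynamic-programming argument. What the paper's route buys in exchange is conceptual economy: it exhibits the general eikonal solution as literally a graph distance function, reusing Example~\ref{ex:eikonal}. One small point you gloss over: a path realizing $d(x,y)<\infty$ for $y\in\partial\V$ may pass through boundary vertices, so to get an admissible path (interior until the last step) you should truncate it at the first boundary vertex it meets; this is immediate but worth saying.
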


\begin{proof}
First notice that when $g = 0$ and $h = 1$ for all $x$, the solution is given by  the (negative) minimal distance along the graph to the boundary vertices
 \[
 u(x) = - \min_{y\in \partial \V} \dist(x,y).
 \]
It follows that $\Eikp = 1$, since $w_{xy}(u(y) - u(x)) = 1$ when $y$, the active neighbour in the minimum, is the next point on the minimizing path which achieves the distance to the boundary. 

Next, we can assume that $g \ge 0$, by replacing $u$ by $u+c$ for an appropriate constant.  We can eliminate the nonzero Dirichlet values as follows.  Suppose $x\in \partial \V$, with $g(x) = g_1$.  Then add a single neighbor $ y_1\not= x$ so that
$\mathbf{y}(y_1)=\{x\}$ with $w_{xy_1} = 1/g_1$.  Consider the new equation on the extended graph, where $x$ is now
an interior vertex  and  $y_1$ is a boundary vertex,  and we set $g(y_1) = 0$.  In this way we have constructed a new Dirichlet problem for the eikonal equation with zero Dirichlet values,  which yields a solution of the original problem.

Finally, for nonconstant $h$, we can consider the graph with  $w_{xy}$ replaced by $w_{xy}h(x)$.
Although this graph may have non-symmetric weights, but this poses no additional difficulties.  
Now we can apply the previous existence result to the rescaled equation.
\end{proof}

\section{Connections with Finite difference approximations}\label{sec:finitediff}
This section focuses on how PDEs on graphs can be obtained as elliptic finite difference approximations to elliptic partial differential equations. 
The natural class of difference schemes for fully nonlinear elliptic equations is the class of monotone (elliptic) finite difference schemes, because they respect the comparison principle.  Such schemes can be shown to converge to the unique viscosity solution of the underlying PDE~\cite{BSnum}.   A systematic method for building monotone difference schemes was developed in~\cite{ObermanSINUM}, where the class of elliptic difference schemes (which agrees with elliptic PDEs, below) was identified.

We build elliptic finite difference approximations of the PDE operators by using Taylor approximations.  For example,  for a smooth function $u(x)$ we have
\begin{align*}
u(x+h) &= u(x) + h u'(x) + \frac{h^2}{2} u''(x) + \frac{h^3}{3!} u'''(x) + \bO(h^4)\\
\end{align*}
and
\begin{align*}
u(x-h) &= u(x) - h u'(x) + \frac{h^2}{2} u''(x) - \frac{h^3}{3!} u'''(x) + \bO(h^4).
\end{align*}
Averaging these equations gives the familiar finite difference expression
\[
u''(x) = \frac{u(x+h) -2 u(x)+u(x-h) }{h^2} + \bO(h^2).
\]
First order expressions for $u_x$ are also easily obtained.  Combining these gives
\begin{align*}
\abs{u_x}& = \frac{1}{h} \max( u(x+h) - u(x), u(x-h) - u(x)) +  \bO(h)\\
\end{align*}
and
\begin{align*}
-\abs{u_x} &= \frac{1}{h} \min ( u(x+h) - u(x), u(x-h) - u(x)) +  \bO(h),
\end{align*}
which are monotone schemes for the operators $\abs{u_x}, -\abs{u_x}$, respectively~\cite{ObermanSINUM}.
Likewise, the approximation for the infinity Laplace operator (\ref{infinitylap})
 is given by 
\bq\label{ILallBall}
-\Delta_\infty u( y) =
\frac{1}{r^2}
\left(
2u(x) - \min_{x\in \partial B_r(y)} u(x) - \max_{x\in \partial B_r(y)} u(x)
\right) + \bO(r),
\eq
which comes from the fact that 
\[
y^- \equiv \arg\min_{x\in \partial B_r(y)} u(x) =  x-r \frac{\grad u}{\abs{\grad u}}+ \bO(r^2)
\]
 and likewise for the maximum
 \[
y^+ \equiv \arg\max_{x\in \partial B_r(y)} u(x) =  x+ r \frac{\grad u}{\abs{\grad u}}+ \bO(r^2).
\]
Thus, we can write
\[
 u(y^+) = u(x) + r \abs{\grad u(x)} + \frac {r^2}{2\abs{\grad u}^2} \langle D^2 u(x)  \grad u(x) ,\grad u(x)\rangle+ \bO(r^3)
\]
and likewise
\[
 u(y^-) = u(x) - r \abs{\grad u(x)} + \frac {r^2}{2\abs{\grad u}^2} \langle D^2 u(x)  \grad u(x) ,\grad u(x)\rangle + \bO(r^3)
\]
so that we have
\[
\frac{u(y^+) + u(y^-)}{2}- u(x) =     \frac{1}{2}\, \langle D^2 u(x)  \frac{\grad u(x)}{|\grad u(x)|} ,\frac{\grad u(x)}{|\grad u(x)|} \rangle\, r^2+ \bO(r^3)
\]
which leads to the expression~\eqref{infinitylap}.

 The discretization of the expression onto a grid is less accurate, because of the lack of directional resolution, but using a wide stencil grid, a consistent, convergent scheme can be obtained~\cite{ObermanILnum}. In fact, the expression~\eqref{ILallBall} is second order accurate for smooth functions~\cite{ObermanpLap}.

Another example is the equation for the convex envelope~\cite{ObermanConvexEnvelope}. 
 The finite difference schemes for this equation lead to a PDE on a graph~\cite{ObermanEigenvalues,ObermanCENumerics}.
Neither the Obstacle problem for the convex envelope, nor the Dirichlet problem for the convex envelope~\cite{ObermanCED}  are proper. 
The PDE has the form $F(D^2u) = \lambda_1(D^2u)$, where $\lambda_1$ is the smallest eigenvalue of the Hessian $D^2u$.  This is discretized using
\[
\ME(u)(x)  = \min_{\norm{v} = 1} \frac{ u(x + hv) - 2u(x) + u(x-hv)}{h^2} + \bO(h^2).
\]
Consistency follows from the fact that the smallest eigenvalue of a symmetric matrix $M$ is given by $\min_{\norm{v} = 1} \langle M \cdot v, v\rangle$.  Applying this fact to the Hessian matrix, and using the Taylor series computation gives the result.

\section{Conclusions}
We have proved uniqueness and existence results for a wide class of uniformly elliptic and degenerate elliptic PDEs on graphs.
The approach combined viscosity solutions type techniques (the comparison principle) with connectivity properties of the graph to establish uniqueness results.  Existence results were established using fixed point theorems, or, in the case of generalized distance functions, explicit solutions.

In the future, we hope to solve the discrete equations on unstructured graphs.  In particular, we hope to implement fast solvers.

In addition the existence theory could be extended: natural analogues of the existence theory for viscosity solutions (barriers, Perron's method) could be studied. 

\bibliographystyle{alpha}
\bibliography{UniqueSchemes}

\end{document}